\documentclass[a4 paper,11pt]{article}
\setlength{\evensidemargin}{.2cm} 
\setlength{\oddsidemargin}{.2cm}
\setlength{\textwidth}{15.92cm}
\pagestyle{myheadings}
\usepackage[active]{srcltx}

\usepackage{epsfig}
\usepackage{amsmath}
\usepackage[latin1]{inputenc}
\usepackage{amsmath,amsthm,amsfonts,amssymb,amscd}
\usepackage{graphicx}

\theoremstyle{plain}
\newtheorem{theorem}{Theorem}[section]
\newtheorem{proposition}[theorem]{Proposition}
\newtheorem{lemma}[theorem]{Lemma}
\newtheorem{corollary}[theorem]{Corollary}

\theoremstyle{definition} \theoremstyle{remark}
\newtheorem{remark}[theorem]{Remark}

\newtheorem{definition}[theorem]{Definition}

    \newtheoremstyle{TheoremNum}
        {\topsep}{\topsep}              
        {\itshape}                      
        {}                              
        {\bfseries}                     
        {.}                             
        { }                             
        {\thmname{#1}\thmnote{ \bfseries #3}}
    \theoremstyle{TheoremNum}

\begin{document}

\begin{center}
\textbf{\huge Curvature and partial hyperbolicity}

\vspace{10 mm}  

{\large FERNANDO CARNEIRO\footnote{First author partially supported by CAPES} and ENRIQUE R. PUJALS\\}
{\large IM-UFRJ, Av. Athos da Silveira Ramos 149, Centro de Tecnologia - Bloco C Cidade Universit\'aria - Ilha do Fund\~ao. CEP 21941-909 Rio de Janeiro - RJ - Brazil \\(e-mail: fernando@impa.br)\\
IMPA, Estrada Dona Castorina, 110, CEP 22460-320, Rio de Janeiro, Brazil \\(email: enrique@impa.br)}

\end{center}

\begin{abstract}
Using quadratic forms, we stablish a criteria to relate the curvature of a Riemannian manifold and partial hyperbolicity of its geodesic flow. We show some examples which satisfy the criteria and another which does not satisfy it but still has a partially hyperbolic geodesic flow.
\end{abstract}

\large

\section{Introduction}

It is well know fact that hyperbolic dynamics has been one of the most  sucessfull theory in dynamical systems. But soon was realized that there  is an easy  way to relax hyperbolicity, called partial hyperbolicity, which allows the tangent bundle to split into invariant subbundles $TM=E^s\oplus E^c\oplus E^u,$  such that the behavior of vectors in $E^s, E^u$ is similar to the hyperbolic case, but vectors in $E^c$ may be neutral for the action of the tangent map. This notion arose in a natural way   in the context of time one maps of Anosov flows, frame flows, group extensions and it was possible to show the existence of open set od partially hyperbolic diffeomorphisms which are not hyperbolic. See \cite{BP}, \cite{Sh}, \cite{M}, \cite{BD}, \cite{BV} for examples of these systems  and \cite{HP}, \cite{PS} for an overview.

However, until the results in \cite{CP}, partially hyperbolic systems where unknown in the context of geodesic flows induced by Riemannian metrics. In fact, in \cite{CP} it was proved that
{\em for some compact locally symmetric space $(M,g)$ whose sectional curvature takes values in the whole interval $[-4a^2,-a^2]$, there is a metric $g^*$ in $M$ such that its geodesic flow is partially hyperbolic but not Anosov.}

On the other hand, from the works of J. Lewowicz, qudratic forms have been a powerfull tool to characterize expansive dynamics in general and hyperbolic ones in particular (see \cite{L1, L4}); moreover, this approach  have been extended to the context of geodesic flows (see for instance \cite{L}, \cite{P},  \cite{R1} and \cite{R2}) and billiards (see \cite{Mar}, \cite{MPS}).  Those techniques, using previous results by Potapov, have been extended and generalized in \cite{W2}.  

In the present paper, we explore the use of quadratic forms  for the particular context of partially hyperbolic geodesic flows. Moreover, we revisite the examples of partially hyperbolic geodesic flows provided in \cite{CP} using quadratic forms and we relate the partially hyperbolicity with the curvature tensor.  

In the second section, we give some definitions and the criteria we are going to apply to get partially hyperbolic examples.

In the thid section, we prove the main theorem with the help of the criteria and to get a corollary, which is going to be useful to prove that some examples are partially hyperbolic.

In the fourth and last section we present some examples of geodesic flows which are going to be partially hyperbolic. 

\section{Definitions}

Before given the results, we need to introduce a few definitions.

A partially hyperbolic flow $\phi_t : M \to M$ in the manifold $M$ generated by the vector field $X: M \to TM$ is a flow such that its quotient bundle $TM / \langle X \rangle$ (assuming that $X$ has not singularities) have an invariant splitting $TM / \langle X \rangle = E^s \oplus E^c \oplus E^u$ such that these subbundles are non trivial and with the following properties:

$$ d\phi_t(x) (E^s(x)) = E^s(\phi_t(x)), d\phi_t(x) (E^c(x)) = E^c(\phi_t(x)), d\phi_t(x) (E^u(x)) = E^u(\phi_t(x)), $$

$$ || d\phi_t(x)|_{E^s} || \leq C \exp(t \lambda), || d\phi_{-t}(x)|_{E^u} || \leq C \exp(t \lambda), $$ $$C \exp(t \mu) \leq || d\phi_t(x)|_{E^c} || \leq C \exp(- t \mu),$$

\noindent for $\lambda < \mu < 0 < C$.

Let $(M,g)$ be a Riemannian manifold, $p: TM \to M$ its tangent bundle, $\phi_t: SM \to SM$ be its geodesic flow. The geodesic flow is always a Reeb flow \cite{P}, i.e., given a $2n+1$-dimensional manifold $N$, an one form $\tau$ such that $\tau \wedge d\tau^n$ is a volume form, the Reeb vector field $Y$ is the vector field such that $i_Y \tau = 1$ and $i_Y d\tau = 0$, and its flow is a Reeb flow. The kernel of $\tau$ is called the contact structure of the contact manifold $(N,\tau)$. It is allways invariant under the flow and transversal to the Reeb vector field \cite{P}. 

The double tangent bundle $TTM$ is isomorphic to the vector bundle $\mathcal{E} \to TM$, $\mathcal{E} = \pi^*TM \oplus \pi^*TM$, with fiber $\mathcal{E}_v = T_{\pi(v)}M \oplus T_{\pi(v)}M$. We define the isomorphism as $$\mathcal{I}: TTM \to \mathcal{E} : Z \to ((\pi \circ V)'(0), \frac{DV}{dt}(0)),$$ where $V$ is a curve on $TM$ such that $V'(0) = Z$. The Sasaki metric in the double tangent bundle is the pull-back of the metric $\widetilde{g}$ on $\mathcal{E}$: $\widetilde{g}_v((\eta_1,\varsigma_1),(\eta_2,\varsigma_2)) = g_{\pi(v)}(\eta_1,\eta_2) + g_{\pi(v)}(\varsigma_1,\varsigma_2)$ where $(\eta_1,\varsigma_1),(\eta_2,\varsigma_2) \in T_{\pi(v)}M \oplus T_{\pi(v)}M$. The contact structure $\xi(SM)$ of the geodesic flow is identified with the vector bundle $\mathcal{E'} \to SM$, $\mathcal{E'}_v \subset \mathcal{E}_v$, for all $v \in SM$, whose fiber at $v$ is $v^{\bot} \oplus v^{\bot}$, where $v^{\bot} = \{ w \in T_{\pi(v)}M : g(w,v) = 0 \}$. The derivative of the geodesic flow, given the identification of $TTM$ and $\mathcal{E}$ is $d_v\phi_t(\eta,\varsigma) = (J(t),J'(t))$, $J(t) \in T_{\phi_t(v)}M$ such that $J''(t) + R(\phi_t(v),J(t))\phi_t(v) = 0$ \cite{Ba1},\cite{P}.

Let $\pi: \xi(SM) \to SM$ be the contact structure of the geodesic flow of $(M,g)$. Let $Q: \xi(SM) \to \mathbb{R}$ be a nondegenerate quadractic form of constant signature $(l,m)$. Let $\mathcal{C}_+(x,v) = \{ \eta \in \xi(x,v) : Q_{(x,v)}(\eta) > 0 \}$ be its positive cone, $\mathcal{C}_-(x,v) = \{ \eta \in \xi(x,v): Q_{(x,v)}(\eta) < 0 \}$ be its negative cone and $\mathcal{C}_0(x,v) = \{ \eta \in \xi(x,v): Q_{(x,v)}(\eta) = 0 \}$ be their boundary. The criteria says the following:

\begin{lemma}
If $$\frac{d}{dt} \mathcal{Q}(\eta,\varsigma) > 0$$ for all $(\eta,\varsigma) \in \mathcal{C}_0(x,v)$, $(x,v) \in SM$, then the flow $\phi_t$ is strictly $Q$-separated. This criteria and reversibility of the geodesic flow imply it has a partially hyperbolic splitting $\xi = E^s \oplus E^c \oplus E^u$, $dim(E^{\sigma}) = l$, $\sigma=s,u$.
\end{lemma}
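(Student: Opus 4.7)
The plan is to combine compactness of $SM$ with the classical monotone cone-field technique of Lewowicz and Wojtkowski \cite{L1,W2}, and then to invoke the reversibility of the geodesic flow to symmetrize the resulting invariant subbundles.

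First, I would upgrade the pointwise hypothesis to a uniform estimate. Since $SM$ is compact and $\mathcal{Q}$ together with its Lie derivative $\frac{d}{dt}\mathcal{Q}$ along $\phi_t$ are continuous, homogeneity of degree two in the fiber reduces the problem to vectors of Sasaki norm one, and the strict positivity of $\frac{d}{dt}\mathcal{Q}$ on the compact set $\mathcal{C}_0\cap\{\widetilde{g}=1\}$ gives a uniform $\varepsilon>0$ with $\frac{d}{dt}\mathcal{Q}(\eta,\varsigma)>\varepsilon$ there. This implies strict $Q$-separation: for every $t>0$ one has $d\phi_t\bigl(\overline{\mathcal{C}_+}\setminus\{0\}\bigr)\subset \mathcal{C}_+$, and symmetrically $d\phi_{-t}\bigl(\overline{\mathcal{C}_-}\setminus\{0\}\bigr)\subset \mathcal{C}_-$.

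Second, I would construct the hyperbolic subbundles as intersections of iterated cones, in the standard way:
$$E^u(x,v)=\bigcap_{T\geq 0} d\phi_T\bigl(\overline{\mathcal{C}_+(\phi_{-T}(x,v))}\bigr),\qquad E^s(x,v)=\bigcap_{T\geq 0} d\phi_{-T}\bigl(\overline{\mathcal{C}_-(\phi_T(x,v))}\bigr).$$
The strict cone invariance forces each of these intersections to collapse to a linear subspace, of dimension equal to the index of positivity (respectively negativity) of $\mathcal{Q}$. Reversibility of the geodesic flow, namely the flip $v\mapsto -v$ conjugating $\phi_t$ with $\phi_{-t}$, pairs the two cones and forces $\dim E^s=\dim E^u=l$, with the complementary $d\phi_t$-invariant center bundle $E^c$ sitting inside $\mathcal{C}_0$.

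Finally, the exponential rates in the definition of partial hyperbolicity follow from a Potapov-type quantitative version of the monotone-cone argument \cite{W2}: the uniform lower bound $\varepsilon$ on $\frac{d}{dt}\mathcal{Q}$, combined with the homogeneity and compactness of $SM$, produces a uniform exponential rate of growth for $\mathcal{Q}$ on $E^u$ and of decay on $E^s$, whereas vectors in $E^c$ have at most subexponential $\mathcal{Q}$-growth along the orbit, hence norm-behavior dominated by that of $E^u$ and $E^s$. I expect the main obstacle to be precisely this third step: strict $Q$-separation by itself yields only monotonicity, so the upgrade to exponential rates and to the strict domination of the central rates by the stable and unstable rates requires a careful rescaling of the form along the flow; it is also the step where reversibility is essential in order to separate $E^c$ from $E^s\oplus E^u$, rather than obtaining merely a partial $E^s\oplus E^{cu}$ splitting.
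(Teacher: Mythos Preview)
The paper does not actually prove this lemma: its entire proof is the sentence ``See the proof in \cite{W1}.'' Your sketch is therefore already more detailed than what the paper provides, and it is in the same spirit as the reference being cited (the Lewowicz--Wojtkowski monotone cone-field machinery, refined via Potapov as in \cite{W2}). Your steps 1 and 3 are the standard ones and are fine.

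There is, however, a genuine confusion in your step 2. The intersection $\bigcap_{T\ge 0} d\phi_{-T}(\overline{\mathcal C_-})$ is a subspace of dimension equal to the \emph{negativity} index $m$, as you yourself write; together with $E^u$ (of dimension $l$) it already spans all of $\xi$, so there is no room left for a center bundle. What strict $Q$-separation alone produces is a single \emph{two}-way dominated splitting $\xi=E^{cs}\oplus E^u$ with $\dim E^u=l$ and $\dim E^{cs}=m$; it does not by itself give $E^s$ of dimension $l$, and reversibility cannot ``force'' the $m$-dimensional intersection of negative cones to become $l$-dimensional. The correct role of reversibility is different: the flip $I(v)=-v$ conjugates $\phi_t$ with $\phi_{-t}$, so transporting the whole cone-field argument through $dI$ produces a \emph{second} dominated splitting $\xi=E^s\oplus E^{cu}$, again with the small piece of dimension $l$; intersecting the two splittings then yields $E^s\oplus E^c\oplus E^u$ with $E^c=E^{cs}\cap E^{cu}$ and $\dim E^s=\dim E^u=l$. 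Your closing paragraph shows you sense that reversibility is needed to separate $E^c$ from $E^s\oplus E^u$, but the mechanism you describe (pairing the two cones of the \emph{same} form $Q$) is not the right one.
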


\begin{proof}
See the proof in \cite{W1}.
\end{proof}

\section{Main results}

In this section we state the main theorem and a corollary which is going to be useful to prove that some geodesic flows are partially hyperbolicity.

Let $(M^n,g)$ be a $n$-dimensional Riemannian manifold, $\nabla$ its Levi-Civita connection. Let $R_x: T_xM \times T_xM \times T_xM \to T_xM$ be its curvature tensor. Let $v \in T_xM$, then $R_x(v,\cdot)v: T_xM \to T_xM$ is a symmetric linear operator. We can restrict it to $v^\bot$, since $R(v,v)v = 0$. So $R(v,\cdot)v : v^\bot \to v^\bot$ is a symmetric linear operator. So we can diagonalize it: there are eigenvalues $\lambda_1 \geq \lambda_2 \geq \ldots \geq \lambda_{n-1}$ and eigenvectors $v_1,v_2,\ldots,v_{n-1}$ such that $R(v,v_k)v = \lambda_k v_k$.

Suppose there is an $1 < r < n-2$ such that $\lambda_{r}(v) > \lambda_{r+1}(v)$ for each $v \in TM$. Then we are able to define $A(v) = \mathbb{R}v_1 \oplus \ldots \oplus \mathbb{R}v_r$ and $B(v) = \mathbb{R}v_{r+1} \oplus \ldots \oplus \mathbb{R}v_{n-1}$. It is easy to see that $A(v) \oplus B(v) = v^\bot$. Let $Gr(r,TM)$ be the Grassmanian bundle of $r$-dimensional subbundles of $TM$. Then $A,B : TM \to Gr(r,TM)$. Also $A(cv) = A(v)$, $B(cv) = B(v)$ for all $c \in \mathbb{R}$, $c \neq 0$. So we consider $A,B : SM \to Gr(r,TM)$, where $SM$ is the unitary tangent bundle of $M$.

Let $P_{A(v)} : T_{p(v)}M \to A(v)$ be the orthogonal projection to $A(v)$. Let $A' = P_{A(v)}' = \frac{d}{dt}|_{t=0} P_{A(\phi_t(v))}$. Let $\eta_A$ be $P_A \eta$. Let $K_A$ be the restriction $R(v,\cdot)v : A(v) \to A(v)$ and $K_B$ be the restriction $R(v,\cdot)v : B(v) \to B(v)$.

Linearization of the derivative of the geodesic flow gives you the system of equations $$\eta' = \varsigma , \varsigma' = - R(v,\eta)v,$$ for $(\eta,\varsigma) \in T_xM \oplus T_xM \cong T_vTM$. 

\begin{theorem}
\label{t.maintheo}
Let $(M,g)$ be a Riemannian manifold. Suppose $A: SM \to Gr(r,TM): v \to A(v) \subset T_{p(v)}M$ is a continuous function (smooth along geodesics). Let the quadractic form be $\mathcal{Q}^c(\eta,\varsigma) = g(\eta_A,\varsigma_A) - c^2g(\eta_B,\eta_B) -  g(\varsigma_B,\varsigma_B)$, where $c$ is a positive real number. If 
$\frac{d}{dt} \mathcal{Q}^c(\eta,\varsigma) = \widetilde{g}(S^c(\eta,\varsigma),(\eta,\varsigma))$ is positive for the following matrix $$S^c = \begin{bmatrix} -K_A & 0 & c^2A' & \frac{1}{2} A' \\ 0 & Id & \frac{1}{2} A' & A' \\ c^2A' & \frac{1}{2} A' & 0 & -c^2Id + K_B \\ \frac{1}{2} A' & A' & -c^2Id + K_B & 0 \end{bmatrix}$$ and for all $(\eta,\varsigma) \in \mathcal{C}_+(x,v)$, $(x,v) \in SM$, then the geodesic flow of the Riemannian manifold $(M,g)$ is partially hyperbolic $dim(E^{\sigma}) = r$, $\sigma = s,u$.
\end{theorem}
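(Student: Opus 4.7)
The plan is to verify the identity $\frac{d}{dt}\mathcal{Q}^c(\eta,\varsigma) = \widetilde{g}(S^c(\eta,\varsigma),(\eta,\varsigma))$ by direct differentiation along the geodesic flow, after which the conclusion is a one-line appeal to the criterion (Lemma~2.1): the positivity hypothesis on $\widetilde g(S^c\cdot,\cdot)$ on $\mathcal{C}_+$ extends by continuity to non-negativity on $\overline{\mathcal{C}_+}\supset \mathcal C_0$, and since $\mathcal{Q}^c$ has signature $(r,\,2n-2-r)$, Lemma~2.1 delivers a partially hyperbolic splitting with $\dim E^s = \dim E^u = r$.

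The computation rests on the Jacobi equations $\eta' = \varsigma$ and $\varsigma' = -R(v,\eta)v$ together with the derivatives of the orthogonal projections $P_A$ and $P_B$ along the flow. Three structural facts drastically simplify the bookkeeping, and I would record them first: (i)~$R(v,\cdot)v$ preserves both $A(v)$ and $B(v)$ (they are sums of its eigenspaces), so $P_A R(v,\eta)v = K_A\eta_A$ and $P_B R(v,\eta)v = K_B\eta_B$; (ii)~differentiating $P_A^2 = P_A$ yields $P_A P_A' = P_A' P_B$, which forces $A'(A)\subset B$ and $A'(B)\subset A$; (iii)~$P_A$ is $g$-symmetric, hence so is $A'$, and $A' = -B'$ on $v^\perp$ because $P_A+P_B$ is the identity there. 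Using these, a term-by-term Leibniz computation on $g(\eta_A,\varsigma_A) - c^2 g(\eta_B,\eta_B) - g(\varsigma_B,\varsigma_B)$ produces: the diagonal contributions $-g(K_A\eta_A,\eta_A)$ and $g(\varsigma_A,\varsigma_A)$; the $B$-cross term $2g((-c^2\mathrm{Id}+K_B)\eta_B,\varsigma_B)$; and four $A'$-cross terms coupling $(\eta_A,\eta_B)$, $(\eta_A,\varsigma_B)$, $(\varsigma_A,\eta_B)$, $(\varsigma_A,\varsigma_B)$ with weights $2c^2$, $1$, $1$, $2$. A block-by-block comparison matches these against the entries $-K_A$, $\mathrm{Id}$, $-c^2\mathrm{Id}+K_B$, $c^2A'$, $\tfrac{1}{2}A'$, $\tfrac{1}{2}A'$, $A'$ of $S^c$.

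The main bookkeeping obstacle is the $A'$-sector: naive differentiation of $P_A\eta$ and $P_A\varsigma$ produces many cross terms mixing $A$- and $B$-components, and only after invoking the $A\leftrightarrow B$ swap from (ii) together with the symmetry from (iii) do the surviving terms collapse onto the four cross-terms compatible with the stated matrix. For instance, a term like $g(P_A'\eta,\varsigma_A)$ would seem to contain contributions from both $\eta_A$ and $\eta_B$, but $P_A'\eta_A\in B\perp A\ni\varsigma_A$ kills the $\eta_A$ piece, leaving only $g(A'\eta_B,\varsigma_A)$. Once the identity $\frac{d}{dt}\mathcal{Q}^c = \widetilde{g}(S^c\cdot,\cdot)$ is in place, the rest of the argument is a direct application of the criterion, with $l=r$ giving the asserted dimensions of $E^s$ and $E^u$.
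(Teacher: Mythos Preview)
Your proposal is correct and follows essentially the same route as the paper: differentiate $\mathcal Q^c$ term by term using the Jacobi equations $\eta'=\varsigma$, $\varsigma'=-R(v,\eta)v$, exploit the projection identities $P_A P_A'=P_A'P_B$ (equivalently $A'(A)\subset B$, $A'(B)\subset A$) together with $P_A'+P_B'=0$, collect terms into the block form $S^c$, and then invoke Lemma~2.1. Your write-up is in fact more explicit than the paper's proof, which stops after displaying the differentiated expression and leaves the block-matching and the appeal to the criterion implicit; your observations (i)--(iii) are exactly the facts that make the paper's final displayed formula equal to $\widetilde g(S^c\cdot,\cdot)$.

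One small point worth tightening: Lemma~2.1 asks for strict positivity of $\frac{d}{dt}\mathcal Q^c$ on $\mathcal C_0$, whereas the theorem's hypothesis (and your continuity argument) only yields non-negativity on $\mathcal C_0$ from positivity on $\mathcal C_+$. The paper does not address this either, and in all the applications (Corollary~3.2, the examples in Section~4) the positivity actually holds on the closed cone, so the issue is cosmetic; but if you want your reduction to Lemma~2.1 to be airtight you should either strengthen the hypothesis to $\overline{\mathcal C_+}\setminus\{0\}$ or note that the examples satisfy the stronger condition.
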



\begin{proof}
We have to calculate the following derivative:
\begin{eqnarray*}
\frac{d}{dt} \mathcal{Q}^c(\eta,\varsigma) & = & \frac{d}{dt} (g(\eta_A,\varsigma_A) - c^2 g(\eta_B,\eta_B) - g(\varsigma_B,\varsigma_B)) \\ & = & g(\varsigma_A,\varsigma_A) - g(R(v,\eta)v,\eta_A) + g(\eta_{A'},\varsigma_A) + g(\eta_A,\varsigma_{A'}) \\ & - & 2c^2  g(\eta_B,\varsigma_B) + 2 g(R(v,\eta)v,\varsigma_B) - 2c^2 g(\eta_{B'},\eta_B) - 2 g(\varsigma_{B'},\varsigma_B).
\end{eqnarray*}
$P_A$ is the orthogonal projection to $A$, $P_B$ is the orthogonal projection to $B$, then $P_A (P_A)' = (P_A)' P_B$ and $P_B (P_A)' = (P_A)' P_A$. It implies that
\begin{eqnarray*}
\frac{d}{dt} \mathcal{Q}^c(\eta,\varsigma) & = & g(\varsigma_A,\varsigma_A) - g(R(v,\eta)v,\eta_A) + g((P_A)' \eta_B,\varsigma_A) + g(\eta_A,(P_A)' \varsigma_B) \\ & - & 2 c^2 g(\eta_B,\varsigma_B) + 2 g(R(v,\eta)v,\varsigma_B) + 2 c^2 g((P_A)' \eta_{A},\eta_B) + 2 g((P_A)' \varsigma_{A},\varsigma_B)). 
\end{eqnarray*}
\end{proof}

\noindent Now suppose we are able to define two functions, $\alpha,\beta: SM \to \mathbb{R}_+$ such that 
\begin{itemize}
\item[i.] $-\alpha(v)^2 > max \{\lambda_i\}_{i=1}^r$, 
\item[ii.] $-\alpha(v)^2 < - \beta(v)^2 < \lambda_i$ if $i=r+1,\ldots,n-1$,
\item[iii.] there is a constant $e \in \mathbb{R}_+$ such that $\beta(v) < e < \alpha(v)$ for all $v \in SM$,
\end{itemize}

\noindent then we are able to fix $c \in \mathbb{R}_+$ such that $c := e$.

\begin{corollary}
\label{c.maintheo}
Under the hypothesis of theorem \ref{t.maintheo} and the hypothesis stated above, there is an $\epsilon: SM \to \mathbb{R}_+$ which depends on the curvature tensor $R$ and the real numbers $c$ and $d$ such that if $\| A'(v) \| < \epsilon(v)$ then geodesic flow of the Riemannian manifold $(M,g)$ is partially hyperbolic $dim(E^{\sigma}) = r$, $\sigma = s,u$.
\end{corollary}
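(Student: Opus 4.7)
My strategy is to verify the hypothesis of Theorem~\ref{t.maintheo} directly by showing that the curvature pinching (i)--(iii) forces $\widetilde{g}(S^c(\eta,\varsigma),(\eta,\varsigma))>0$ on $\mathcal{C}_+$ whenever $\|A'(v)\|$ is sufficiently small, after which that theorem delivers partial hyperbolicity with $\dim(E^\sigma)=r$. The work splits naturally into an algebraic analysis at the idealized case $A'\equiv 0$, followed by a continuity argument to perturb.

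First I would examine $S^c$ at $A'(v)=0$. The matrix is then block-diagonal: the $A$-block $\mathrm{diag}(-K_A,\,Id)$ is strictly positive definite by hypothesis (i), since $-K_A\geq\alpha^2\,Id$, while the $B$-block, whose off-diagonal entries are $K_B-c^2\,Id$, is indefinite in general. Positivity on $\mathcal{C}_+$ must therefore be squeezed out of the defining cone inequality $g(\eta_A,\varsigma_A)>c^2\|\eta_B\|^2+\|\varsigma_B\|^2$. Using weighted AM--GM, $2\|\eta_B\|\|\varsigma_B\|\leq c\|\eta_B\|^2+\|\varsigma_B\|^2/c$, I would obtain, with $M:=\sup_{SM}\|K_B-c^2\,Id\|$,
\[
2\bigl|g((K_B-c^2\,Id)\eta_B,\varsigma_B)\bigr|\;\leq\;\frac{M}{c}\bigl(c^2\|\eta_B\|^2+\|\varsigma_B\|^2\bigr)\;\leq\;\frac{M}{c}\,g(\eta_A,\varsigma_A),
\]
while the positive $A$-contribution satisfies $g(-K_A\eta_A,\eta_A)+\|\varsigma_A\|^2\geq\alpha^2\|\eta_A\|^2+\|\varsigma_A\|^2\geq 2\alpha\|\eta_A\|\|\varsigma_A\|\geq 2\alpha\,g(\eta_A,\varsigma_A)$. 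Combining,
\[
\widetilde{g}\bigl(S^c(\eta,\varsigma),(\eta,\varsigma)\bigr)\bigr|_{A'=0}\;\geq\;\Bigl(2\alpha-\frac{M}{c}\Bigr)\,g(\eta_A,\varsigma_A).
\]
Since on $\mathcal{C}_+$ one has $g(\eta_A,\varsigma_A)>0$ strictly (otherwise the cone inequality forces the $B$-components, and then the $A$-components, to vanish), hypothesis (iii) provides just enough freedom in the choice of $c=e\in(\beta,\alpha)$ to secure $2\alpha c>M$, yielding strict positivity on $\mathcal{C}_+$.

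Second, $S^c$ depends linearly on $A'(v)$ and all remaining data are continuous on the compact base $SM$, so the infimum of $\widetilde{g}(S^c(\eta,\varsigma),(\eta,\varsigma))$ over unit vectors $(\eta,\varsigma)\in\overline{\mathcal{C}_+(v)}$ is a continuous function of $A'(v)$. The first step shows this infimum is strictly positive at $A'(v)=0$, so there exists $\epsilon(v)>0$, depending on the curvature tensor $R$ and on the tuned constant $c$, such that the form remains strictly positive on $\mathcal{C}_+$ whenever $\|A'(v)\|<\epsilon(v)$. Theorem~\ref{t.maintheo} then concludes the proof.

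I expect the main obstacle to be the algebraic estimate in the first step: because the $B$-block of $S^c$ is intrinsically indefinite, positivity cannot be inferred from $S^c$ alone and must be extracted from the cone condition defining $\mathcal{C}_+$. The weighted AM--GM inequality with weight $c$ is the mechanism that accomplishes this, and hypothesis (iii) --- the existence of some $e\in(\beta,\alpha)$, giving room to tune $c:=e$ --- is precisely what makes the decisive inequality $2\alpha c>M$ achievable.
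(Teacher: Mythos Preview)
Your plan matches the paper's proof almost exactly: analyze $S^c$ at $A'=0$, bound the positive $A$--block from below by $\alpha^2\|\eta_A\|^2+\|\varsigma_A\|^2$, control the indefinite $B$--block via the cone inequality and a weighted AM--GM with weight $c=e$, and then perturb. The paper arrives at
\[
\frac{d}{dt}\mathcal{Q}^e \;\geq\; \|\varsigma_A-\alpha\eta_A\|^2 \;+\;\Bigl(2\alpha-e-\tfrac{\beta^2}{e}\Bigr)\,g(\eta_A,\varsigma_A),
\]
and notes that $2\alpha-e-\beta^2/e>0$ for $e\in(\beta,\alpha)$; your constant $M/c$ plays the role of $e+\beta^2/e$.

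One technical point deserves tightening. In your first step you collapse $\alpha^2\|\eta_A\|^2+\|\varsigma_A\|^2$ all the way down to $2\alpha\,g(\eta_A,\varsigma_A)$, which leaves only
\[
\widetilde{g}\bigl(S^c(\eta,\varsigma),(\eta,\varsigma)\bigr)\big|_{A'=0}\;\geq\;\Bigl(2\alpha-\tfrac{M}{c}\Bigr)\,g(\eta_A,\varsigma_A).
\]
This is strictly positive on the \emph{open} cone $\mathcal{C}_+$, but not on $\overline{\mathcal{C}_+}\setminus\{0\}$: a unit vector with $\eta_B=\varsigma_B=0$ and $\eta_A\perp\varsigma_A$ lies on $\mathcal{C}_0$ and kills your lower bound, so the ``infimum over unit vectors in $\overline{\mathcal{C}_+}$'' you invoke in the second step is actually zero, and the perturbation argument as written does not go through. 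The fix is exactly what the paper does: retain the square $\|\varsigma_A-\alpha\eta_A\|^2$ rather than discard it. With that term kept, the lower bound vanishes on $\overline{\mathcal{C}_+}$ only at the origin, the infimum over unit vectors is genuinely positive, and your continuity/compactness argument then produces $\epsilon(v)$ as claimed.
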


\begin{proof}
The proof is straightforward from the theorem \ref{t.maintheo}, if one notices that if $A' = 0$ then $\frac{d}{dt} \mathcal{Q}^e = \widetilde{g}(S^e \cdot,\cdot)$ where $$S^e = \begin{bmatrix} -K_A & 0 & 0 & 0 \\ 0 & Id & 0 & 0 \\ 0 & 0 & 0 & -e^2Id + K_B \\ 0 & 0 & -e^2Id + K_B & 0 \end{bmatrix}$$ so for $(\eta,\varsigma) \in C_+$ we have $\frac{d}{dt} \mathcal{Q}^e \geq g(\varsigma_A,\varsigma_A) + \alpha^2 g(\eta_A,\eta_A) - 2(e^2 + \beta^2) g(\eta_B,\varsigma_B)$ and $g(\eta_A,\varsigma_A) \geq e^2g(\eta_B,\eta_B) + g(\varsigma_B,\varsigma_B) \geq 2e g(\eta_B,\varsigma_B)$. Then,
$$ \frac{d}{dt} \mathcal{Q}^e \geq g(\varsigma_A - \alpha \eta_A,\varsigma_A - \alpha \eta_A) + 2 \alpha g(\eta_A,\varsigma_A) - 2(e^2 + \beta^2) g(\eta_B,\varsigma_B)$$  $$\geq g(\varsigma_A - \alpha \eta_A,\varsigma_A - \alpha \eta_A) + (2 \alpha - e - \frac{\beta^2}{e}) g(\eta_A,\varsigma_A) > 0$$ for $(\eta,\varsigma) \in \mathcal{C}_+$, since $2 \alpha - e - \frac{\beta^2}{e}$ when $e \in (\beta,\alpha)$.
\end{proof}

\begin{remark}
So, if we look at the statements in the corollary, we see that at the moment we have to ask for the existence of an interval between the $r$-biggest eingenvalues of $R(v,\cdot)v$ and the other eiganvalues (second hypothesis), and a non-oscillatory hypothesis for this interval, i.e., it has to have a constant $e$ in the interval which does not depend on $v \in SM$. One good question would be if partial hyperbolicity still holds when there is no such constant.
\end{remark}

\section{Examples}

In this section we show some examples. The first example, in subsection \ref{s.negative}, is the Riemannian manifold of negative curvature. In the case of the Riemannian manifold of negative curvature the criteria is the same as the criteria for hyperbolicity of the geodesic flow. In the subsection \ref{s.phsymmetric} the example satisfies the criteria of partial hyperbolicity. It is also an hyperbolic example. In the subsection \ref{s.nphsymmetric} the example does not satisfy the criteria and is not partially hyperbolic. In subsection \ref{s.nonanosov} we show the last example. For the last example the criteria is satisfied out of a small set of vectors in the unit tangent bundle. The last example is non Anosov and partially hyperbolic \cite{CP}. 

\subsection{Negatively curved manifolds}
\label{s.negative}

In the negatively curved case, the theorem is trivial. In this case, $A(v) = (\mathbb{R}v)^{\bot}$, and there is no need of a $\beta$ function. Suppose $K \leq - \alpha^2$, for a positive real number $\alpha$. Since $(P_A)' = 0$ in this case, the criteria trivially holds:


$$ \frac{d}{dt} g(\eta,\varsigma) = g(\varsigma,\varsigma) - R(v,\eta,v,\eta)  \geq g(\varsigma,\varsigma) + \alpha^2g(\eta,\eta) > 0.$$

\noindent for any $(\eta,\varsigma) \in C_+(x,v)$, $(x,v) \in SM$.

\subsection{Locally symmetric manifolds}
\label{s.lsm}

In this section we look at the case of $(M,g)$ compact locally symmetric manifold of noncompact type. In a previous work we had shown that if rank is one then the geodesic flow is partially hyperbolic, if rank is at least two, then it is not. 

\begin{definition}
A simply connected Riemannian manifold is called symmetric if for every $x \in M$ there is an isometry $\sigma_x: M \to M$ such that
$$\sigma_x(x) = x, d\sigma_x(x) = - id_{T_xM}.$$

\noindent The property of being symmetric is equivalent to:
\begin{itemize}
\item $\nabla R \equiv 0$,
\item if $X(t)$, $Y(t)$ and $Z(t)$ are parallel vector fields along $\gamma(t)$, then $R(X(t),Y(t))Z(t)$ is also a parallel vector field along $\gamma(t)$.
\end{itemize}
\end{definition}

Each locally symmetric space $N$ is the quotient of a simply connected symmetric space $M$ and a group $\Gamma$ acting on $M$ discretly, without fixed points, and isometrically, such that $N = M / \Gamma.$

\subsubsection{Locally symmetric manifolds of noncompact type and of rank one}
\label{s.phsymmetric}

Locally symmetric spaces with non constant negative curvature have the following parallel subspaces of $(\mathbb{R} v)^{\bot}$:
\begin{eqnarray}
\label{d.AB} 
A(x,v) & := & \{ w \in T_xM : K(v,w) = -4 a^2 \}, \\ B(x,v) & := & \{ w \in T_xM : K(v,w) = - a^2 \},
\end{eqnarray} 

\noindent where $a \in \mathbb{R}$.

The curvature tensor for locally symmetric manifolds of noncompact type and rank one is $$R(v,\eta)v = - 4 a^2 \eta_A - a^2 \eta_B,$$ where $v \in SM$.

Partial hyperbolicity follows from:

\begin{eqnarray*} \frac{d}{dt}(g(\eta_A,\varsigma_A) - e^2 g(\eta_B,\eta_B) - g(\varsigma_B,\varsigma_B)) & = & g(\varsigma_A,\varsigma_A) - g(R(v,\eta)v,\eta_A) \\ - 2e^2 g(\eta_B,\varsigma_B) + 2 g(R(v,\eta)v,\varsigma_B) & = &
4a^2 g(\eta_A,\eta_A) + g(\varsigma_A,\varsigma_A) \\ - 2e^2g(\eta_B,\varsigma_B) - 2 a^2  g(\eta_B,\varsigma_B) & > & 4a^2 g(\eta_A,\eta_A) + g(\varsigma_A,\varsigma_A) \\ - (e + \frac{a^2}{e}) g(\eta_A,\varsigma_A) & > & 0
\end{eqnarray*}

\noindent if $e \in (a,2a)$, for if $e \in (a,2a)$ then $e + \frac{a^2}{e} < 4a$. 


\subsubsection{Locally symmetric manifolds of noncompact type and of rank at least two}
\label{s.nphsymmetric}

\begin{definition}
Let $\mathfrak{g}$ be the algebra of Killing fields on the symmetric space $M$, $p \in M$. Define
$$\mathfrak{k} := \{ X \in \mathfrak{g} : X(p) = 0 \},$$
$$\mathfrak{p} := \{ X \in \mathfrak{g} : \nabla X (p) = 0 \}.$$
\noindent For these subspaces of $\mathfrak{g}$, $\mathfrak{k} \oplus \mathfrak{p} = \mathfrak{g}$ and $\mathfrak{k} \cap \mathfrak{p} = \{ 0 \}$, and $T_pM$ identifies with $\mathfrak{p}$.
\end{definition}

\begin{definition}
Given $p \in M$, we define the involution $\phi_p(g): G \to G: g \to \sigma_p \circ g \circ \sigma_p$. Then, we obtain $\theta_p : d \phi_p : \mathfrak{g} \to \mathfrak{g}$. Since $\theta_p^2 = id$ and $\theta_p$ preserves the lie brackets, the properties of this subspaces of $\mathfrak{g}$ are:

\begin{itemize}
\item[i.] $\theta_{p|\mathfrak{k}} = id$,
\item[ii.] $\theta_{p|\mathfrak{p}} = - id$,
\item[iii.] $[\mathfrak{k},\mathfrak{k}] \subset \mathfrak{k}$, $[\mathfrak{p},\mathfrak{p}] \subset \mathfrak{k}$, $[\mathfrak{k},\mathfrak{p}] \subset \mathfrak{p}$,
\end{itemize}
\end{definition}




Fix a maximal Abelian subspace $\mathfrak{a} \subset \mathfrak{p}$. Let $\Lambda$ denote the set of roots determined by $\mathfrak{a}$, and $$\mathfrak{g} = \mathfrak{g}_0 + \sum_{\alpha \in \Lambda} \mathfrak{g}_{\alpha}.$$

\noindent $\mathfrak{g}_{\alpha} = \{ w \in \mathfrak{g} : (ad X) w = \alpha(X) w \}$, $\alpha: \mathfrak{a} \to \mathbb{R}$ is a one-form.

Define a corresponding decomposition for each $\alpha \in \Lambda$, $\mathfrak{k}_{\alpha} = (id + \theta) \mathfrak{g}_{\alpha}$ and $\mathfrak{p}_{\alpha} = (id - \theta) \mathfrak{g}_{\alpha}$. Then:

\begin{itemize}
\item[i.] $id + \theta: \mathfrak{g}_{\alpha} \to \mathfrak{k}_{\alpha}$ and $id - \theta: \mathfrak{g}_{\alpha} \to \mathfrak{p}_{\alpha}$ are isomorphisms,
\item[ii.] $\mathfrak{p}_{\alpha} = \mathfrak{p}_{- \alpha}$, $\mathfrak{k}_{\alpha} = \mathfrak{k}_{- \alpha}$, and $\mathfrak{p}_{\alpha} \oplus \mathfrak{k}_{\alpha} = \mathfrak{g}_{\alpha}  \oplus \mathfrak{g}_{- \alpha}$,
\item[iii.] $\mathfrak{p} = \mathfrak{a} + \sum_{\alpha \in \Lambda} \mathfrak{p}_{\alpha}$, $\mathfrak{k} = \mathfrak{k}_0 + \sum_{\alpha \in \Lambda} \mathfrak{k}_{\alpha}$, where $\mathfrak{k}_0 = \mathfrak{g}_0 \cap \mathfrak{k}$.
\end{itemize}

For $X \in \mathfrak{a}$ we have that, along the geodesic $\gamma$ in $M$ with initial conditions $\gamma(0) = p$, $\gamma'(0) = X$, the Jacobi fields are linear combinations of the following Jacobi fields:

$$cosh(|\alpha(X)|t)v_j(t) \textrm{ and } sinh(|\alpha(X)|t)v_j(t).$$


\begin{proposition}
If $(M,g)$ is a locally symmetric manifold of noncompact type and rank bigger than one, then there is no continuous function $$A: SM \to Gr(r,TM): v \to A(v) \subset T_{p(v)}M$$ satisfying the hypothesis of the theorem \ref{t.maintheo}.
\end{proposition}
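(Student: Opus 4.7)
The plan is to use the root-space decomposition of the symmetric space to describe the Jacobi operator $R(v,\cdot)v$ explicitly along the maximal flat, and then to show that in rank at least two its eigenvalue ordering always has a collision at every admissible position $r$, which obstructs the positivity hypothesis of Theorem \ref{t.maintheo} for every continuous $A$.

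First I would identify $T_pM$ with $\mathfrak{p}$ and, for $v\in\mathfrak{a}$, combine the bracket relations $[\mathfrak{p},\mathfrak{p}]\subset\mathfrak{k}$, $[\mathfrak{k},\mathfrak{p}]\subset\mathfrak{p}$ with the ad-eigenvalue characterisation $[X,w]=\alpha(X)w$ for $w\in\mathfrak{g}_\alpha$ recalled in the text. This yields $R(v,X)v=0$ for every $X\in\mathfrak{a}$ and $R(v,w)v=-\alpha(v)^2w$ for every $w\in\mathfrak{p}_\alpha$, so the spectrum of $R(v,\cdot)v|_{v^\perp}$ consists of the eigenvalue $0$ (with multiplicity at least $\mathrm{rank}(M)-1$) together with each $-\alpha(v)^2$ (with multiplicity $\dim\mathfrak{p}_\alpha$).

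Next I would locate vectors in $SM$ at which the ordered eigenvalues collide. The unit sphere of $\mathfrak{a}$ carries two families of codimension-one loci: the Weyl chamber walls $\{\alpha(v)=0\}$, where $-\alpha(v)^2$ merges into the zero eigenspace, and the equi-magnitude hyperplanes $\{|\alpha(v)|=|\beta(v)|\}$ for distinct roots $\alpha,\beta$, where two negative eigenvalues coincide. Because $\mathrm{rank}(M)\geq 2$ both families are nonempty, and by the $K$-equivariance of the eigenstructure they spread throughout $SM$. A direct case analysis on the admissible range $1<r<n-2$ then shows that for every such $r$ one can exhibit a $v_0\in SM$ with $\lambda_r(v_0)=\lambda_{r+1}(v_0)$.

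Finally, I would argue that this collision rules out any continuous choice of $A$. Reading the matrix $S^c$ together with the Corollary's estimate shows that positivity of $\frac{d}{dt}\mathcal{Q}^c$ on $\mathcal{C}_+$ forces, at each $v$, the eigenvalues of $K_A(v)=R(v,\cdot)v|_{A(v)}$ to lie uniformly below those of $K_B(v)=R(v,\cdot)v|_{B(v)}$. At $v_0$ the bottom-$r$ eigenspace of $R(v_0,\cdot)v_0$ is not well defined, so any continuous $r$-plane $A(v)$ passing through $v_0$ must mix eigenspaces of different eigenvalue strata for $v$ near $v_0$; this produces a direction in $A(v)$ whose curvature-eigenvalue exceeds one in $B(v)$, contradicting the strict inequality supplied by positivity of $S^c$. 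The main obstacle will be this last step: one must exploit the positivity of $S^c$ to \emph{force} $A(v)$ into the curvature-eigenspace splitting rather than simply assume it, so a quantitative perturbation argument near $v_0$, in the spirit of the $A'\to 0$ limit used in the Corollary, is likely needed to close the proof.
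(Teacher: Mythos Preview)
Your proposal is more ambitious than what the paper actually does, and the gap you flag at the end is precisely where the paper sidesteps the difficulty by taking a narrower reading of the statement.

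The paper's argument (the two paragraphs immediately following the proposition) does not attempt to rule out an arbitrary continuous $r$-plane field satisfying the positivity of $S^c$. It tacitly interprets ``the hypothesis of Theorem~\ref{t.maintheo}'' as including the preamble to that theorem, where $A(v)$ is \emph{defined} to be the span of the eigenvectors of $R(v,\cdot)v$ attached to the extreme $r$ eigenvalues. Under that reading the only task is to show that this curvature-determined $A$ cannot be continuous. The paper does this by a direct two-point comparison: at a regular $v\in\mathfrak{a}$ one has $A(v)=\oplus_{i=1}^r\,\mathfrak{p}_{\alpha_i}$ for certain roots $\alpha_i$; passing to a $v'$ on the wall $\{\alpha_1=0\}$ forces $\mathfrak{p}_{\alpha_1}$ out of $A(v')$, and the paper then asserts that one cannot move continuously in the Grassmannian between these two sums of root spaces while remaining the extreme-$r$ eigenspace. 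That is the entire argument; there is no analysis of $S^c$ or of the positivity condition.

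Your route is genuinely different: you aim to show that positivity of $S^c$ on $\mathcal{C}_+$ \emph{forces} any admissible $A(v)$ to coincide with the curvature eigenspace, and then to use an eigenvalue collision $\lambda_r(v_0)=\lambda_{r+1}(v_0)$ to derive a contradiction. This would give a stronger conclusion, covering every continuous $A$ rather than only the curvature-defined one. But the step you single out as ``the main obstacle'' is real and is nowhere treated in the paper: the block structure of $S^c$ does not by itself pin $A$ to the eigenspace decomposition, since the off-diagonal $A'$ blocks can in principle compensate for an $A$ that is tilted relative to the eigenspaces. Closing that gap would require a genuine additional argument beyond what either you or the paper supply.

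In short, the paper proves a restricted version of the claim by a brief discontinuity-of-eigenspaces argument, while you outline the stronger theorem and correctly locate its missing ingredient.
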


So, in the case of rank bigger than one, fix $r < dim M$, pick $v \in T_xM$ such that $A(v) = \oplus_{i=1}^r p_{\alpha_i}$, $i = 1, \ldots, r$, $|\alpha_1| > |\alpha_2| > \ldots > |\alpha_r|$, such that if $\beta \neq \alpha_i$, $\forall i = 1,\ldots,r$, then $\beta(v) < \alpha_i(v)$, $\forall i = 1,\ldots,r$.

Now we pick $(x,v')$ such that $\alpha_1(v') = 0$. Then $A(v') = \oplus_{i=1}^r p_{\beta_i}$, for some $\beta_j \in \Lambda$, $j=1,\ldots,r$, $|\beta_1| > |\beta_2| > \ldots > |\beta_{r}|$. Notice that $\alpha_1(v') = 0$ implies $\beta_j \neq \alpha_1$, $\forall j=1,\ldots,r$. There is no way to go from one decomposition to the other continuously, so there is no way to define a continuous $A$ as in the statement of the theorem. 

\begin{remark}
In the case of locally symmetric manifolds of rank bigger than one, all the three hypothesis stated prior to theorem \ref{t.maintheo} are not satisfied. It would be interesting to look for examples which do not satisfy only one of these hypothesis.
\end{remark}




\subsection{Non Anosov example}
\label{s.nonanosov}

Let $(M,g)$ be a Riemannian manifold, $\nabla$ its Levi-Civita connection, $R$ its curvature tensor. Let $(M,g_1=e^{\alpha}g)$ be Riemannian manifold with a metric in the conformal class of $g$, $\nabla^1$ its Levi-Civita connection, $R^1$ its curvature tensor. Then

$$\nabla^1_X Y = \nabla_X Y + \frac{1}{2} g(\nabla \alpha, X) Y + \frac{1}{2} g(\nabla \alpha, Y) X - \frac{1}{2} g(X,Y) \nabla \alpha,$$

\noindent and if $\alpha$ is $C^2$-close to zero, then

$$R^1(X,Z,X,W) \approx R(X,Z,X,W) - \frac{1}{2} g(X,X) g(\nabla_Z \nabla \alpha,W) - \frac{1}{2} g(Z,W) g(\nabla_X \nabla \alpha,X).$$  

If $(M,g)$ is the locally symmetric space with curvature in $[-4a^2,-a^2]$, the idea is to pick a closed geodesic $\gamma$ without self-intersections, take a tubular neighborhood aroud $\gamma$. Define an orthogonal $x_0,x_1,\ldots,x_{n-1}$ coordinate system in the tubular neighborhood such that along $\gamma$, $\gamma' = \partial_{x_0}$, $K(\gamma',\partial_{xi}) = -4a^2$ for $i=1,\ldots,r$ and $K(\gamma',\partial_{x_i})= - a^2$ for $i=r+1,\ldots,n-1$. Then we are able to define an $\alpha: SM \to \mathbb{R}$ such that for $g_1 = e^{\alpha}g$, $K(\gamma',\partial_{xi}) = -4a^2$ for $i=1,\ldots,r$ and $K(\gamma',\partial_{x_i})= 0$ for $i=r+1,\ldots,n-1$. 
Define $A$ as in subsection \ref{s.lsm}. Then, for the same quadractic form of the theorem \ref{t.maintheo}, the criteria holds at a set $\mathcal{PH} \subset SM$. Let $\mathcal{T}:= SM - \mathcal{PH}$. Any orbit which crosses $\mathcal{T}$ stays there as little time as we want - time depends on the size of the tubular neighborhood. So, with a bit more work, we can show that its geodesic flow is partially hyperbolic (see details in \cite{CP}).





\end{document}